\theoremstyle{plain}
\newtheorem{theorem}{Theorem}[section]
\newtheorem{lemma}{Lemma}
\newtheorem{corollary}[theorem]{Corollary}
\newtheorem{maintheorem}{Theorem}
\newcommand{\bd}{\mathrm{bd}}
    \renewenvironment{proof}[1][\proofname]{%
      \par\pushQED{\qed}\normalfont%
      \topsep6\p@\@plus6\p@\relax
      \trivlist\item[\hskip\labelsep\bfseries#1\@addpunct{.}]%
      \ignorespaces
    }{%
      \popQED\endtrivlist\@endpefalse
    }
\title{Convex bodies with many elliptic sections}
\author{Isaac Arelio \and Luis Montejano}
\date{} 
\begin{document}
\maketitle

\abstract{We show in this paper that two normal elliptic sections through every point of the boundary of a smooth convex body essentially characterize an ellipsoid and furthermore, that four different pairwise non-tangent elliptic sections through every point of the $C^2$-differentiable boundary of a convex body also essentially characterize an ellipsoid.}

\section{Introduction}

Bianchi and Gruber  \cite{gruber} proved that the boundary of a convex body $K\subset\mathbb{R}^n$  is an ellipsoid if for every direction, continuously, we can choose a hyperplane which intersects $\bd K$ in an ellipsoid. The proof of this result leads to the following characterization of the ellipsoid: 
Let $K\subset\mathbb{R}^3$ be a convex body and let $\alpha>0$.  If for every  support line of $K$ there is a plane $H$ containing it whose intersection with $\bd K$ is an ellipse of area at least $\alpha$, then $\bd K$ is an ellipsoid.

This characterization requires that for every $p\in \bd K$ and every direction in the support plane of $K$ at $p$ there is a section of $\bd K$ in that direction, containing $p$,  that is an ellipse. The aim of this work is to give a characterization of the ellipsoid where for every boundary point only a finite number of ellipses containing that point are required.

The sphere was characterized in this manner by Miyaoka and Takeuchi \cite{miyaoka,takeuchi} as the unique compact, simply connected $C^\infty$ surface that satisfies one of the following properties: i) contains three circles through each point; ii) contains two transversal circles through each point; or iii) contains one circle  inside a normal plane. In our paper, we shall show that  two normal elliptic sections through every point of the boundary of a smooth convex body essentially characterize an ellipsoid and furthermore, four different pairwise nontangent, elliptic sections through every point of the $C^2$-differentiable boundary of a convex body also characterize an ellipsoid.

The set $$C = \{(x , y , z )\in\mathbb{R}^3 |x^2 + y^2 + z^2 + \tfrac{5}{4} xyz\leq1,\newline max\{|x|,|y|,|z|\}\leq1\}$$ \cite[Example 2]{alonso} is a convex body whose boundary is not an ellipsoid. The planes parallel to $x = 0$, $y = 0$ and $z = 0$ intersect $\bd C$ in ellipses. Thus there are two ellipses for every point $p$ in the boundary of $C$ and there are three ellipses for every point $p$ in the boundary of $C$ except at six points. 

Before giving the precise statement of the main theorems, we introduce the concepts and results we will use.

\section{Definitions and auxiliary results}

Let us consider a continuous function $\Psi \colon \bd K\rightarrow\mathbb{S}^2$. For every $p\in \bd K$, let $L_p$ be the line through $p$ in the direction $\Psi(p)$. We say that $\Psi$ is an \emph{outward} function if for every $p\in \bd K$,
\begin{itemize}
\item the line $L_p$ is not tangent to $\bd K$, and $\{p+t\Psi(p) | t>0\}$ is not in $K$; and

\item there is a point $q\in \bd K\setminus L_p$ such that the line $L_q$ intersects the line $L_p$ at a point of the interior of $K$.
\end{itemize}

For example, the function $\eta \colon \bd K\rightarrow\mathbb{S}^2$ such that for every $p\in \bd K$, $\eta(p)$ is the normal unit vector to $\bd K$ at $p$ is an outward function. To see this, let $O$ be the midpoint of the interval $L_p\cap K$ and consider the farthest and the nearest point of $\bd K$ to the point $O\in \mathrm{int} K$. One of them, call it $q$, is not in $L_p$, otherwise $K$ would be a solid sphere, in which case the assertion is trivially true; hence clearly $O\in L_q$.

Let $K$ be a convex body and let $H_1$ and $H_2$ be two planes. In the following it will be useful to have a criterion to know when two sections $H_1\cap \bd K$ and $H_2\cap \bd K$ intersect in exactly two points. This holds exactly when the line $L = H_1\cap H_2$ intersects the interior of $K$.

We say that a collection of lines $\mathcal{L}$ in $\mathbb{R}^{n+1}$ is a \emph{system of lines} if for every direction $u\in\mathbb{S}^n$ there is a unique line $L_u$ in $\mathcal{L}$ parallel to $u$.

Given a system of lines $\mathcal{L}$ we define the function $\delta \colon \mathbb{S}^n\rightarrow\mathbb{R}^{n+1}$ which assigns to every direction $u\in\mathbb{S}^n$ the point $\delta(u)\in L_u$ which traverses the distance from the origin to the line $L_u$. When $\delta$ is continuous we say that $\mathcal{L}$ is a \emph{continuous system of lines}. Finally, the set of intersections between any two different lines of the system will be the \emph{center} of $\mathcal{L}$.

A continuous system of lines has a certain property that will be useful for our purpose which  is stated in the following lemma. 

\begin{lemma} \label{lem1} Let $\mathcal{L}$ be a continuous system of lines in $\mathbb{R}^{n+1}$. For every direction $u\in\mathbb{S}^n$, there exists $v\in\mathbb{S}^n$, $v\neq u$, such that the lines $L_u$ and $L_v$ have a common point. \end{lemma}

\begin{proof} Let $H$ be the plane through the origin that is orthogonal to $L_u$. Without loss of generality assume that $H$ is the plane $z = 0$ and $L_u$ contains the origin. Define $\mathcal{L}_H$ as the set of orthogonal projections of all lines in $\mathcal{L}$ parallel to $H$ onto $H$; $\mathcal{L}_H$ is then a system of lines in $H$.

By \cite[Proposition 3]{stein}, there is a line in $\mathcal{L}_H$ passing through the origin. This means that there is a direction $v$ in $H$ with the property that the line $L_v\in\mathcal{L}$ intersects $L_u$. \end{proof}


Suppose that there is a continuous system of lines $\mathcal{L}$ such that the center of $\mathcal{L}$ is contained in the interior of $K$. Note  that every line of $\mathcal{L}$ thus intersects the interior of $K$. For every $p\in \bd K$, let $L_p$ be the unique line of $\mathcal{L}$ through $p$. Let us define the continuous function $\Psi \colon \bd K\rightarrow\mathbb{S}^2$ in such a way that $\Psi(p)$ is the unique unit vector parallel to $L_p$ with the property that $\{p + t \Psi(p) | t > 0\}$ is not in $K$. By Lemma \ref{lem1}, $\Psi \colon \bd K\rightarrow\mathbb{S}^2$ is an outward function. If $K\subset\mathbb{R}^3$ is a strictly convex body, then the system of diametral lines of $K$ is a continuous system of lines whose center is contained in the interior of $K$. 

The following lemma will be of use to us in Section~4.

\begin{lemma} \label{lem2} Let $M_1$ and $M_2$ be two surfaces tangent at $p\in M_1\cap M_2$. If the normal sectional curvatures of $M_1$ and $M_2$ coincide in three different directions, then the normal sectional curvatures of $M_1$ and $M_2$ coincide in every direction. \end{lemma}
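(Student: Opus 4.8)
The plan is to reduce the statement to an elementary fact about functions of the form $\alpha + \beta\cos 2\theta + \gamma\sin 2\theta$. Since $M_1$ and $M_2$ are tangent at $p$, they have the same tangent plane $\Pi := T_pM_1 = T_pM_2$, and with respect to the common unit normal they have well-defined second fundamental forms $\mathrm{II}_1,\mathrm{II}_2$ at $p$. Fix an orthonormal basis $\{e_1,e_2\}$ of $\Pi$ and parametrize the unit directions in $\Pi$ by $v(\theta) = \cos\theta\, e_1 + \sin\theta\, e_2$. The normal sectional curvature of $M_i$ at $p$ in the direction $v(\theta)$ equals $\kappa_i(\theta) = \mathrm{II}_i(v(\theta),v(\theta))$, so, writing the coefficients of $\mathrm{II}_i$ in this basis as $a_i,b_i,c_i$, we get
$$\kappa_i(\theta) = a_i\cos^2\theta + 2b_i\cos\theta\sin\theta + c_i\sin^2\theta.$$

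Next I would rewrite each $\kappa_i$ with the double-angle identities $\cos^2\theta = \tfrac12(1+\cos 2\theta)$, $\sin^2\theta = \tfrac12(1-\cos 2\theta)$ and $2\cos\theta\sin\theta = \sin 2\theta$, obtaining
$$\kappa_i(\theta) = \frac{a_i+c_i}{2} + \frac{a_i-c_i}{2}\cos 2\theta + b_i\sin 2\theta.$$
Hence the difference $f(\theta) := \kappa_1(\theta) - \kappa_2(\theta)$ has the form $f(\theta) = \alpha + \beta\cos 2\theta + \gamma\sin 2\theta$ for real constants $\alpha,\beta,\gamma$. Because a direction is unoriented, $\kappa_i$ and therefore $f$ are periodic in $\theta$ with period $\pi$; the hypothesis that the normal sectional curvatures coincide in three different directions thus yields $\theta_1,\theta_2,\theta_3$, pairwise distinct modulo $\pi$, with $f(\theta_j)=0$, and then $2\theta_1,2\theta_2,2\theta_3$ are pairwise distinct modulo $2\pi$.

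Finally I would invoke the elementary fact that a function $\phi \mapsto \alpha + \beta\cos\phi + \gamma\sin\phi$ with three zeros that are pairwise distinct modulo $2\pi$ must vanish identically: writing $\beta\cos\phi + \gamma\sin\phi = R\cos(\phi-\phi_0)$ with $R = \sqrt{\beta^2+\gamma^2}$, the equation $R\cos(\phi-\phi_0) = -\alpha$ has at most two solutions in a period unless $R=0$ and $\alpha=0$, which forces $\alpha=\beta=\gamma=0$. Applying this with $\phi=2\theta$ gives $f\equiv 0$, i.e.\ the normal sectional curvatures of $M_1$ and $M_2$ agree in every direction. The argument is essentially a computation; the only point that needs attention is the bookkeeping with unoriented directions, which is precisely what turns "three directions" into "three zeros of a $2\theta$-periodic function" and makes the two-zeros-per-period bound decisive.
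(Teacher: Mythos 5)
Your proof is correct and follows essentially the same route as the paper: both reduce the difference of the two normal curvature functions to the form $A+B\cos 2\theta+C\sin 2\theta$ and conclude that three zeros force it to vanish identically. Your final counting step (writing $B\cos\phi+C\sin\phi=R\cos(\phi-\phi_0)$ and noting at most two solutions per period) is if anything cleaner than the paper's critical-point argument, and your remark that three unoriented directions give three values of $2\theta$ distinct mod $2\pi$ handles the one point the paper glosses over.
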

\begin{proof} Suppose the Euler curvature formula for the first surface $M_1$ is given by 
\[
\kappa_1\cos^2(\theta) +\kappa_2\sin^2(\theta),
\]
\noindent and for the second surface $M_2$ by
\[
\kappa^{\prime}_1\cos^2(\theta -t_0) +\kappa^{\prime}_2\sin^2(\theta-t_0).
\]
Suppose that the difference between these two expressions  has three zeros. After simplifying the difference, replacing $\cos^2(x)$ and $\sin^2(x)$ by their corresponding expressions in $cos(2x)$ and $sin(2x)$, we obtain an expression of the form $A+B\cos(2\theta)+C\sin(2\theta)$, where $A$, $B$ and $C$ depend only on the principal curvatures and the angle $t_0$. Since $\theta$ lies in the unit circle, where the number of zeros is bounded by the number of critical points, we may assume that the expression $C\cos(2\theta)-B\sin(2\theta)$ has at least three zeros. This implies that  $A+B\cos(2\theta)+C\sin(2\theta)=0$ and hence that the the normal sectional curvatures of $M_1$ and $M_2$ coincide in every direction. \end{proof}

\section{Two elliptic sections through a point}

\begin{theorem} \label{thm:ellipsoid} Let $K\subset\mathbb{R}^3$ be a convex body and $\alpha$ a given positive number. Suppose that there is an outward function $\Psi \colon \bd K\rightarrow \mathbb{S}^2$ such that for every $p\in \bd K$\nolinebreak  there are planes $H_1, H_2$ determining an angle at least $\alpha$, where $H_1\cap H_2$ is the line $L_p$ through $p$ in the direction $\Psi(p)$ and $K\cap H_i$ is an elliptic section, for $i=1,2$.  Then $\bd K$ is an ellipsoid. \end{theorem}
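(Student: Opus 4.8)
The plan is to show that through every boundary point there is actually a full continuum of elliptic sections, so that the Bianchi--Gruber characterization (stated in the introduction) applies. The starting observation is that an elliptic section is very rigid: a plane section $K\cap H$ which is an ellipse determines, and is determined by, a quadratic cone. Precisely, for each $p\in\bd K$ consider the two ellipses $E_1=K\cap H_1$ and $E_2=K\cap H_2$ through $p$ with $H_1\cap H_2=L_p$. Because $L_p$ meets $\mathrm{int}\,K$ (as $\Psi$ is outward, via the criterion recalled before Lemma~\ref{lem1}), the line $L_p$ crosses each ellipse $E_i$ in two points, one of which is $p$; call the other endpoints $p_1\in E_1$ and $p_2\in E_2$. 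The key geometric step is to produce, for each $p$, a quadric $Q_p$ (an ellipsoid) containing both $E_1$ and $E_2$: two conics in distinct planes that share the chord structure along $L_p$ lie on a pencil of quadrics, and one can pin down a member of that pencil which is bounded, hence an ellipsoid, using that $E_1,E_2$ bound convex regions on the same side. This is essentially the three-dimensional analogue of the fact that two conics through four common points determine a pencil; here the role of the common points is played by the incidence along $L_p$ together with the angle condition $\ge\alpha$, which keeps the construction nondegenerate and continuous in $p$.

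Granting the family $\{Q_p\}_{p\in\bd K}$ of ellipsoids, the next step is to extract from it the continuum of elliptic sections needed to invoke Bianchi--Gruber. Fix a direction, i.e.\ a line $\ell$ in a support plane of $K$ at some point; I want a plane $H\supset\ell$ with $K\cap H$ an ellipse and of area bounded below. The idea is to intersect $\bd K$ with $Q_p$ for a suitable $p$: since $Q_p\cap\bd K\supset E_1\cup E_2$ is a union of conics, and two distinct ellipsoids meet in a curve of degree at most four, the sections $E_1,E_2$ together with the local agreement of $\bd K$ and $Q_p$ near $p$ force $\bd K$ and $Q_p$ to be tangent to high order at $p$; sweeping $p$ along $\bd K$ one transports elliptic sections in every direction. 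The lower area bound $\alpha$ is used exactly as in Bianchi--Gruber: it prevents the elliptic sections from degenerating and makes the selection continuous, so that the hypothesis of their theorem ("for every support line there is a plane through it meeting $\bd K$ in an ellipse of area $\ge\alpha$") is met, and we conclude $\bd K$ is an ellipsoid.

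A cleaner route to the same end, which I would try first, is to prove directly that $Q_p$ is independent of $p$. Consider $p$ and a nearby point $p'\in E_1\subset\bd K$: the ellipse $E_1$ lies on both $Q_p$ and $Q_{p'}$, so $Q_p$ and $Q_{p'}$ are two ellipsoids sharing a conic, hence they belong to a pencil all of whose members contain $E_1$. Along $E_1$ the surface $\bd K$ is squeezed between these ellipsoids and must be tangent to each of them at every point of $E_1$ where a second elliptic section is also present — and by hypothesis that is every point. Tangency of $\bd K$ with $Q_p$ along the whole curve $E_1$, for every ellipse $E_1$ arising this way, rigidifies $\bd K$: one gets an open subset of $\bd K$ lying on a single ellipsoid, and then a connectedness/analytic-continuation argument (the boundary is connected, and coincidence of a $C^2$ convex surface with a quadric propagates) upgrades this to all of $\bd K$. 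Here Lemma~\ref{lem1} guarantees that the outward structure is genuinely two-dimensional, so these curves $E_1$ sweep out all of $\bd K$.

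The main obstacle is the first step: showing that the two given ellipses $E_1,E_2$ through $p$ actually lie on a common ellipsoid, with the construction depending continuously on $p$. Two conics in $\mathbb{R}^3$ need not lie on any quadric in general; one must use the specific incidence furnished by $L_p=H_1\cap H_2$ (both conics pass through $p$ and through the second intersection points with $L_p$), the convexity of $K$ (the conics bound convex discs on the correct side, which selects the ellipsoidal member of the quadric pencil over the hyperboloidal or degenerate ones), and the angle bound $\alpha$ (to stay away from the degenerate case $H_1=H_2$ and to keep the pencil varying continuously). Managing the degenerate configurations in this pencil, and the continuity of the selection, is where the real work lies; once the family $\{Q_p\}$ is in hand, the reduction to Bianchi--Gruber or the direct rigidity argument is comparatively routine.
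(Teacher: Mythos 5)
Your proposal has genuine gaps, and the most serious one is that you never actually prove the step on which everything hinges: that $\bd K$ coincides with a quadric in a neighborhood of $p$. In both of your routes this is asserted rather than derived (``local agreement of $\bd K$ and $Q_p$ near $p$,'' ``tangency of $\bd K$ with $Q_p$ along the whole curve $E_1$ \dots rigidifies $\bd K$''). Knowing that $E_1\cup E_2\subset Q_p$ says nothing a priori about the rest of $\bd K$ near $p$; one must use the hypothesis at \emph{nearby} points. The paper does exactly this: it takes a sequence $q_n\to p$ with $q_n\notin Q$, and uses the angle bound $\alpha$ (note: $\alpha$ is an \emph{angle} between $H_1$ and $H_2$, not an area bound --- you have misread the hypothesis) to guarantee that for large $n$ at least one of the two elliptic sections through $L_{q_n}$ meets each of the ellipses already known to lie on $Q$ in two points, hence shares six points with $Q$ and must be the conic $Q\cap H_n$, forcing $q_n\in Q$. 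Without some version of this argument you have no bridge from ``two conics lie on a quadric'' to ``the surface is locally a quadric,'' and your proposed appeal to Bianchi--Gruber cannot get off the ground either, since their hypothesis demands an elliptic section through \emph{every} support line, which you have not produced.

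The second gap is that you work only with the two ellipses $E_1,E_2$ through $L_p$, and (as you correctly compute) these determine only a \emph{pencil} of quadrics, not a single one; you then need an unjustified selection of a distinguished member $Q_p$, and your later claim that $E_1$ lies on $Q_{p'}$ for nearby $p'\in E_1$ fails because the two elliptic sections furnished by the hypothesis at $p'$ pass through $L_{p'}$ and need not include $E_1$ at all. The paper resolves the non-uniqueness precisely by invoking the second clause in the definition of an outward function: there is a point $q$ with $L_q\cap L_p\in\mathrm{int}\,K$, which yields a \emph{third} ellipse $E_3$ meeting each of $E_1,E_2$ in two points; the nine resulting points pin down a unique quadric $Q$ containing all three ellipses, and three ellipses (not two) are what make the six-common-points argument above available. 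Your proposal never uses this clause of the hypothesis, which is a sign that the argument as sketched cannot be complete.
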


\begin{proof}
Let $p\in \bd K$. By hypothesis there are two planes $H_1$ and $H_2$ such that $L_p=H_1\cap H_2$ and $E_i=\bd  K\cap H_i$ is an ellipse, for $i=1,2$. Furthermore, there is a point $q\in \bd K$ such that the line $L_q$ intersects $L_p$ at a point in the interior of $K$. Hence at least one of the elliptic sections through $L_q$ is different from $E_1$ and $E_2$; call this section $E_3$. We have that $E_3$ has two points in common with $E_i$, for $i=1,2$, because $L_p\cap L_q$ belongs to the interior of both sections.

\begin{center}
\includegraphics[width=3in]{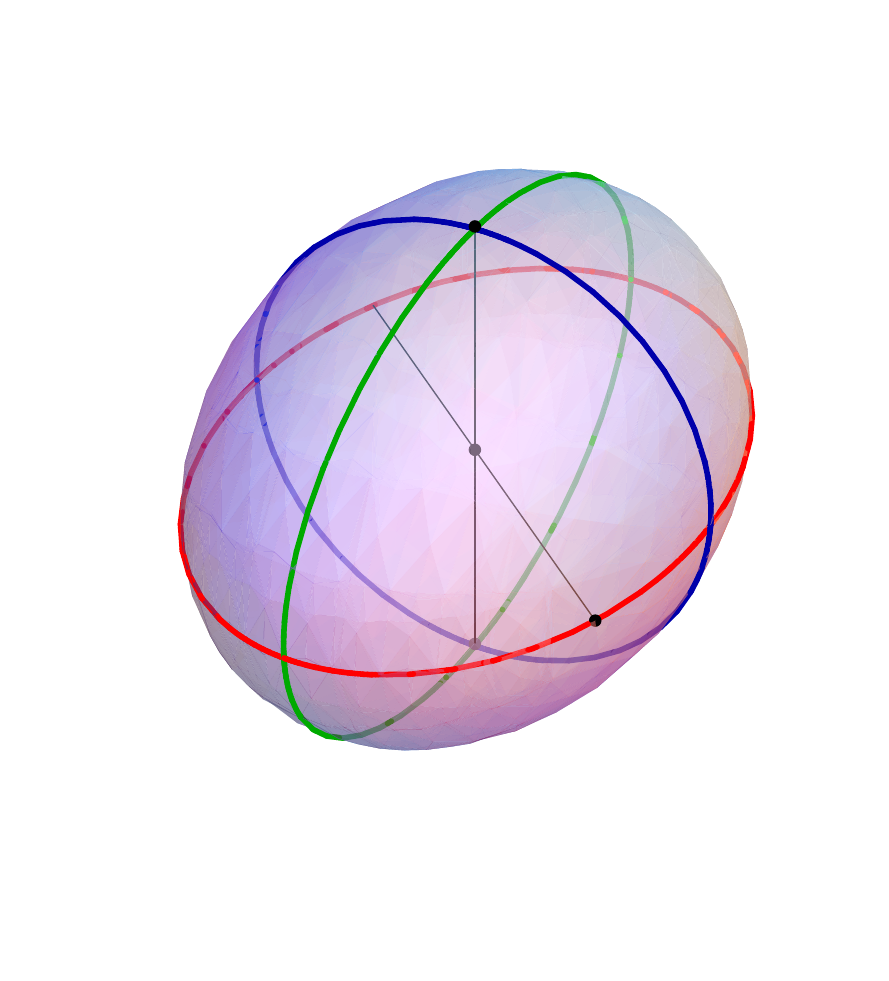}\rput(-5.5,6.2){$E_1$}\rput(-2.2,7){$E_2$}\rput(-.85,5){$E_3$}\rput(-2.5,3.3){$q$}\rput(-3.5,6.7){$p$}
\end{center}

We will now show that there is a quadric surface which contains $E_i$, for $i=1,2,3$. Let  \[E_1\cap E_2=\{p,p'\}\] and for $i=1,2$, \[E_i\cap E_3=\{p_{i3},{p'_{i3}}\}.\] We can choose $p_i\in \bd E_i$ distinct from $p$, $p'$, $p_{i3}$ and $p'_{i3}$ for $i=1,2$. Then the points $p,p',p_{13},p'_{13},p_{23},p'_{23},p_1,p_2$ and $p_3$ uniquely determine a quadric surface $Q$. Furthermore $E_i$ has five points in common with $Q$. This implies that $E_i\subset Q$, for $i=1,2,3$.


Now we will verify that there is an open neighborhood $\mathcal{N}$ of $p$ such that $\bd K\cap\mathcal{N}$ is contained in $Q$. Suppose that there is no such neighborhood. Then there is a sequence $\{q_n\}_{n\in\mathbb{N}}$ in $\bd K\setminus Q$ such that $\lim\limits_{n\rightarrow\infty}q_n=p$, and moreover, the lines $L_{q_n}$ converge to the line $L_p$. Our strategy is to prove that if $n$ is sufficiently big, then one of the elliptic sections of $K$ through $L_{q_n}$ intersects each elliptic section $E_i$ at two points. It is clear that if $n$ is sufficiently big, both elliptic sections of $K$ through $L_{q_n}$ intersect $E_3$ at two points, because $L_{q_n}$ intersects the interior of the ellipse $E_3$. The same holds if $L_{q_n}$ intersects the interior of the ellipse $E_i$. Suppose then that $L_{q_n}$ does not intersect the interior of the ellipse $E_i$, and let $N_1$ be the unit vector normal to the plane determined by $L_{q_n}$ and $p_{i3}$ in the direction of the semiplane not containing ${p_{i3}}'$, and let $N_2$ be the unit vector normal to the plane determined by $L_n$ and ${p_{i3}}'$ in the direction of the semiplane not containing ${p_{i3}}$. Let $\alpha_n$ be the angle determined by  $N_1$ and $N_2$.

\begin{center}
\includegraphics[width=2.65in]{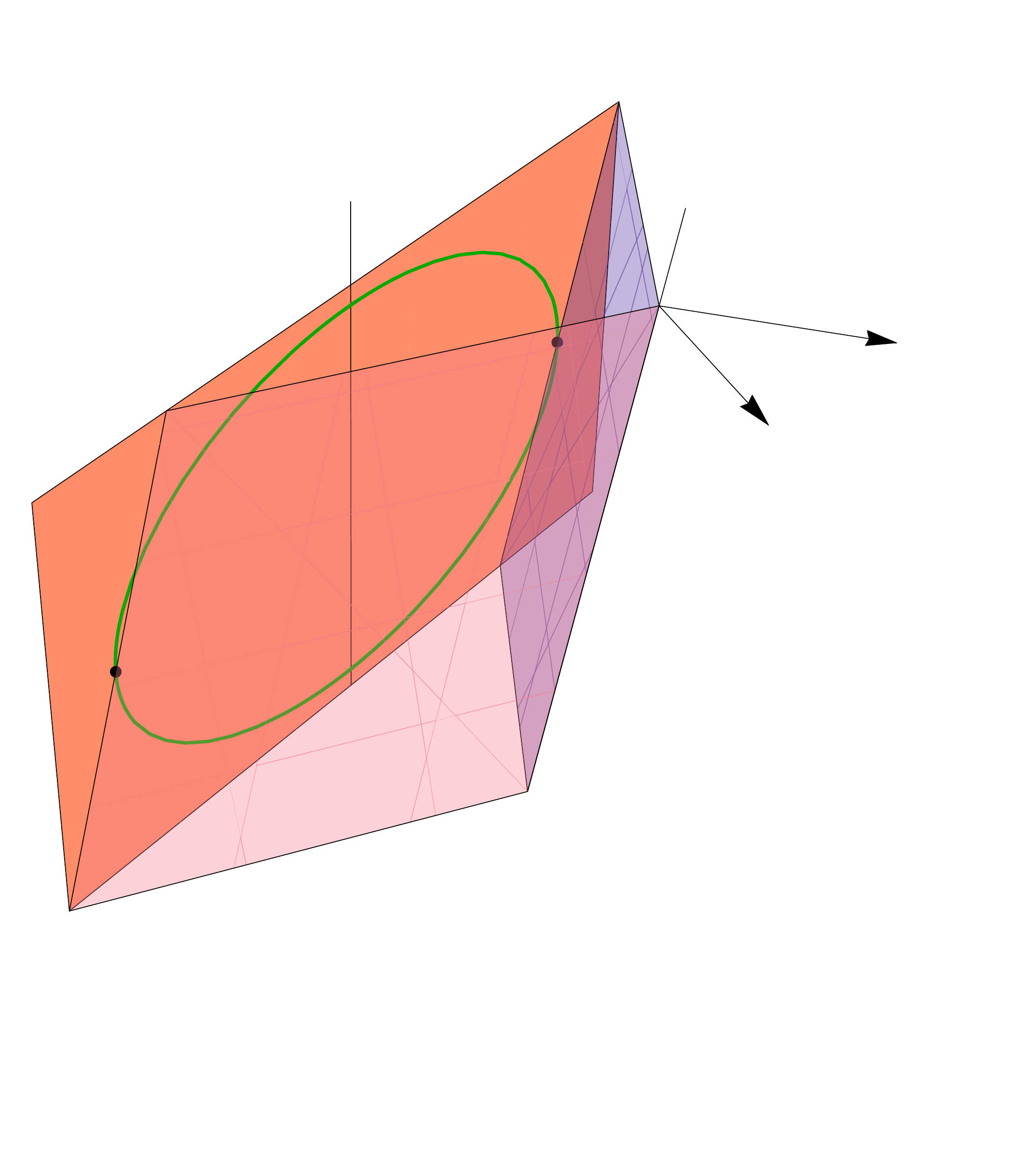}\rput(-1.75,5.4){$\alpha_n$}\rput(-3.35,5.5){$p'_{i3}$}\rput(-5.6,3.5){$p_{i3}$}\rput(-.6,5.4){$N_1$}\rput(-2,6.75){$L_{q_n}$}\rput(-4.3,6.75){$L_p$}\rput(-1.5,4.75){$N_2$}\rput(-5,4.75){$E_i$}
\end{center}

The convergence of $\{L_{q_n}\}_{n\in\mathbb{N}}$ to $L_p$ implies that $\lim\limits_{n\rightarrow\infty}\alpha_n=0$. Then there is $k_i\in\mathbb{N}$ such that for every $n\geq k_i$, we have  $\alpha_n <\alpha$. This means that at least one of the normal elliptic sections through $L_{q_n}$ intersects the relative interior of the segment between $p_{13}$ and $p'_{13}$ and therefore has two points in common with $E_i$.

Thus there is $k_0\in\mathbb{N}$ such that for $n > k_0$ there is a plane $H_n$ through $L_{q_n}$ which determines an elliptic section $F_n=H_n\cap \bd K$ and $F_n$ has $6$ points in common with $Q$. This shows that $F_n$ coincides with the conic $H_n \cap Q$, and hence that $q_n\in Q$. This proves that there is an open neighborhood $\mathcal{N}$ of $p$ such that $\mathcal{N}\cap \bd K\subset Q$.

We conclude that there is a finite open cover of $\bd K$ in which every element coincides with a quadric; by the connectedness of $\bd K$ and the fact that two quadrics that coincide in a relative open set of $\bd K$ must be the same quadric, $\bd K$ is thus contained in a quadric. Therefore $\bd K$ is an ellipsoid. \end{proof}

\begin{theorem} Let $K\subset\mathbb{R}^3$ be a convex body and $\alpha$ a given positive number. Suppose that there is a continuous system of lines $\mathcal{L}$ such that the center of $\mathcal{L}$ is contained in the interior of $K$ and through every $L$ in $\mathcal{L}$ there are planes $H_1$, $H_2$ determining an angle at least $\alpha$ such that  $K\cap H_i$ is an elliptic section, for $i = 1,2$. Then $\bd K$ is an ellipsoid. Moreover, if for every $L$ in $\mathcal{L}$ one of the elliptic sections is a circle, then $\bd K$ is a sphere. \end{theorem}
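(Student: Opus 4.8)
The plan is to reduce this statement to Theorem~\ref{thm:ellipsoid}, since the hypotheses here are a natural strengthening of the hypotheses there. First I would recall the construction already given in the excerpt: from a continuous system of lines $\mathcal{L}$ whose center lies in $\mathrm{int}\,K$, one obtains an outward function $\Psi\colon\bd K\to\mathbb{S}^2$ by letting $\Psi(p)$ be the unit vector parallel to the unique line $L_p\in\mathcal{L}$ through $p$ that points away from $K$ (i.e.\ $\{p+t\Psi(p)\mid t>0\}\cap K=\emptyset$). The excerpt already verifies, via Lemma~\ref{lem1}, that this $\Psi$ is outward. Now for each $p\in\bd K$ the hypothesis provides planes $H_1,H_2$ through $L=L_p$ meeting at angle $\geq\alpha$ with $K\cap H_i$ elliptic; these are exactly the planes required by Theorem~\ref{thm:ellipsoid} for the point $p$ and the direction $\Psi(p)$. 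Hence Theorem~\ref{thm:ellipsoid} applies verbatim and $\bd K$ is an ellipsoid.

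For the ``moreover'' part, I would argue as follows. Suppose that for every $L\in\mathcal{L}$ one of the two elliptic sections is a circle. We already know $\bd K$ is an ellipsoid, say $\bd K=E$. An ellipsoid that is not a sphere has exactly two families of parallel circular cross-sections (the planes of the umbilical directions), so at most a very restricted set of planes cut $E$ in circles. The key point is to derive a contradiction from having a circular section through \emph{every} line of a continuous system. Concretely, for each direction $u\in\mathbb{S}^2$ there is a line $L_u\in\mathcal{L}$ and a plane $H_u\supset L_u$ with $E\cap H_u$ a circle; the normal direction $\nu(u)$ of $H_u$ must therefore lie in one of the two fixed circular-plane directions of $E$. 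Thus the map $u\mapsto \nu(u)$ takes values in a finite set (essentially two directions, up to sign), while $L_u$ varies over all directions. But $H_u$ contains $L_u$, so $\nu(u)\perp u$; since $\nu(u)$ ranges over only finitely many values, $u$ is forced to lie in one of finitely many great circles, contradicting that $u$ ranges over all of $\mathbb{S}^2$. Hence no such $\mathcal{L}$ exists unless $E$ is a sphere, i.e.\ $\bd K$ is a sphere.

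The main obstacle is making the ``moreover'' argument fully rigorous about the circular sections of a non-spherical ellipsoid: one must recall (or prove) that a non-spherical ellipsoid has precisely two one-parameter families of circular sections, all planes in each family being parallel, so that the set of possible normal directions of a circular section is exactly two antipodal pairs; then the perpendicularity constraint $\nu(u)\perp u$ immediately restricts $u$ to a measure-zero set, which is the desired contradiction. A secondary point worth spelling out is the affine-invariance reduction: applying a linear map that sends $E$ to a sphere turns every plane section into a plane section and every circle into an ellipse but does not in general preserve circles, so the cleanest route is to work directly with $E$ and its classical circular-section structure rather than to normalize. With those facts in hand the proof is short.

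\begin{proof}
By the discussion preceding Lemma~\ref{lem2}, the continuous system of lines $\mathcal{L}$ with center in $\mathrm{int}\,K$ gives rise to an outward function $\Psi\colon\bd K\to\mathbb{S}^2$, where $\Psi(p)$ is the unit vector parallel to the unique line $L_p\in\mathcal{L}$ through $p$ such that $\{p+t\Psi(p)\mid t>0\}$ is not in $K$. For every $p\in\bd K$, the hypothesis provides planes $H_1,H_2$ through $L_p=H_1\cap H_2$ determining an angle at least $\alpha$ with $K\cap H_i$ an elliptic section for $i=1,2$. Thus all the hypotheses of Theorem~\ref{thm:ellipsoid} are satisfied, and we conclude that $\bd K$ is an ellipsoid.

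Suppose now in addition that for every $L\in\mathcal{L}$ one of the two elliptic sections is a circle, and assume for contradiction that the ellipsoid $\bd K$ is not a sphere. A non-spherical ellipsoid has exactly two one-parameter families of circular sections, and within each family all the cutting planes are parallel; consequently the unit normal vector of any plane that meets $\bd K$ in a circle belongs to a fixed set $N$ consisting of two pairs of antipodal directions. For each $u\in\mathbb{S}^2$, let $L_u\in\mathcal{L}$ be the line parallel to $u$, and let $H_u$ be a plane containing $L_u$ with $\bd K\cap H_u$ a circle; write $\nu(u)$ for a unit normal of $H_u$. Then $\nu(u)\in N$, and since $L_u\subset H_u$ we have $\nu(u)\perp u$. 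Hence every $u\in\mathbb{S}^2$ lies in one of the finitely many great circles $\{u\in\mathbb{S}^2\mid u\perp n\}$, $n\in N$, which is absurd. Therefore $\bd K$ is a sphere.
\end{proof}
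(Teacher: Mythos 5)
Your proof is correct. The first half is exactly the paper's argument: the system of lines with center in $\mathrm{int}\,K$ yields the outward function $\Psi$ via Lemma~\ref{lem1}, and Theorem~\ref{thm:ellipsoid} then applies verbatim. For the ``moreover'' part, however, you take a genuinely different route. The paper looks at the single line $L_0\in\mathcal{L}$ parallel to the diameter of $K$, translates the circular section through $L_0$ to a parallel central circular section containing the diameter, and concludes that all three axes are equal; this is very terse, and as written it really only forces \emph{two} semi-axes to equal the largest one (an oblate spheroid does admit a central circular section through a diameter), so finishing it off cleanly requires using a second line of $\mathcal{L}$ in a non-degenerate direction. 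Your argument instead uses \emph{all} lines of the system at once: since a non-spherical ellipsoid has at most two parallel families of circular sections, the normals of circle-cutting planes lie in a finite set $N$, and the constraint $\nu(u)\perp u$ would confine every direction $u\in\mathbb{S}^2$ to finitely many great circles, which is absurd. This is more global and arguably more robust than the paper's computation --- it handles the triaxial and spheroidal cases uniformly, since all that is used is finiteness of $N$. The one point you should state carefully is the classical classification itself: a spheroid has exactly one family of circular sections rather than two, and ``the planes of the umbilical directions'' is a slight abuse of terminology; neither affects the argument, which only needs that $N$ is finite and that the hypothesized circles are nondegenerate (guaranteed since the sections are honest elliptic sections).
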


\begin{proof} Note first that every line of $\mathcal{L}$ intersects the interior of $K$. For every $p\in \bd K$, let $L_p$ be the unique line of $\mathcal{L}$ through $p$ and let us define the continuous function $\Psi \colon \bd K\rightarrow \mathbb{S}^2$ in such a way that $\Psi(p)$ is the unique unit  vector parallel to $L_p$ with the property that $\{p + t\Psi(p) | t > 0\}$ is not in $K$. By  Lemma \ref{lem1}, $\Psi \colon \bd K \rightarrow\mathbb{S}^2$ is an outward function and by Theorem \ref{thm:ellipsoid}, $\bd K$ is an ellipsoid.

Let $L_0$ in $\mathcal{L}$ be the line parallel to the diameter of $K$. Since  there is a circular section of $K$ through $L_0$ parallel to the diameter, and since every section of the ellipsoid $\bd K$ parallel to this circular section is also circular, then there is a circular section of the ellipsoid $\bd K$ through the diameter. This implies that the three axes of the ellipsoid $\bd K$ have the same length and therefore that $\bd K$ is a sphere. \end{proof}

Let $K\subset\mathbb{R}^3$ be a smooth, strictly convex body and let $p\in \bd K$. Suppose that $H$ is a plane through the unit normal vector of $K$ at $p$. Then we say that the section $H\cap K$ is a \emph{normal section} of $K$ at $p$. If $H$ is a plane containing the diametral line of $K$ through $p$, then we say that the section $H\cap K$ is a \emph{diametral section} of $K$ at $p$.

\begin{theorem} \label{thm:diametral} Let $K\subset\mathbb{R}^3$ be a smooth, strictly convex body and $\alpha$ a given positive number. Suppose that through every $p\in \bd K$ there are two elliptic normal (respectively diametral) sections determining an angle at least $\alpha$. Then $\bd K$ is an ellipsoid. \end{theorem}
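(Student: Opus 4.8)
The plan is to deduce this directly from Theorem~\ref{thm:ellipsoid} by exhibiting, in each of the two cases, an outward function $\Psi \colon \bd K \to \mathbb{S}^2$ whose associated line $L_p$ through $p$ is common to the two given elliptic sections at $p$; once that is set up, nothing further is needed.

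For the normal case I would take $\Psi = \eta$, the outward unit normal field, which is well defined because $K$ is smooth and which was already shown in Section~2 to be an outward function. The line through $p$ in direction $\eta(p)$ is precisely the normal line of $K$ at $p$, and by definition every normal section of $K$ at $p$ is cut by a plane containing this line. So if $H_1,H_2$ are the two hypothesised elliptic normal sections at $p$, both planes contain the normal line; since the angle between them is at least $\alpha>0$ they are distinct, hence $H_1\cap H_2$ is exactly the normal line, i.e.\ the line $L_p$ in direction $\Psi(p)$. Thus the hypotheses of Theorem~\ref{thm:ellipsoid} hold verbatim and $\bd K$ is an ellipsoid.

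For the diametral case I would instead invoke the fact recalled after Lemma~\ref{lem1}: since $K$ is strictly convex, its system of diametral lines $\mathcal{L}$ is a continuous system of lines whose center lies in the interior of $K$. Following exactly the construction given there, I set $\Psi(p)$ to be the unit vector parallel to the diametral line $L_p$ through $p$ for which $\{p+t\Psi(p)\mid t>0\}$ is not in $K$; by Lemma~\ref{lem1} this $\Psi$ is outward. Every diametral section of $K$ at $p$ is cut by a plane containing $L_p$, so the same distinctness argument as above shows the two given elliptic diametral sections meet exactly along $L_p$, and Theorem~\ref{thm:ellipsoid} again yields the conclusion.

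I do not expect a genuine obstacle: all of the analytic content — forcing a local quadric through three pairwise non-tangent elliptic sections and propagating it along $\bd K$ by connectedness — is already packaged inside Theorem~\ref{thm:ellipsoid}, and the existence of the required outward functions is packaged in the discussion around Lemma~\ref{lem1}. The only step needing a sentence of care is checking that the two hypothesised elliptic sections through $p$ intersect precisely along the normal (resp.\ diametral) line, rather than along some unrelated chord; this is immediate once one observes that they lie in distinct planes both of which contain that line.
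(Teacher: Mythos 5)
Your proposal is correct and is exactly the argument the paper intends (the paper in fact omits the proof, treating the theorem as an immediate consequence of Theorem~\ref{thm:ellipsoid} together with the two outward-function constructions from Section~2: the normal field $\eta$ and the diametral-line system via Lemma~\ref{lem1}). Your one sentence of extra care — that the two section planes, being distinct and both containing the normal (resp.\ diametral) line, must intersect exactly in that line — is the right detail to make explicit.
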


Motivated by the fact that for every diametral line of an ellipsoid there are two sections of the same area, we have the following result.

\begin{corollary} Let $K\subset\mathbb{R}^3$ be a smooth, strictly convex body and $\alpha$ a given positive number. Suppose that through every diametral line there are three elliptic sections of the same area determining an angle at least $\alpha$. Then $\bd K$ is a sphere. \end{corollary}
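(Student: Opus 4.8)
The plan is to use Theorem~\ref{thm:diametral} as the engine and then extract the extra rigidity from the equal-area hypothesis. First I would observe that the hypothesis of the corollary implies the hypothesis of Theorem~\ref{thm:diametral}: through every diametral line there are (at least) two elliptic sections determining an angle at least $\alpha$, namely any two of the three guaranteed elliptic sections, and the system of diametral lines of a strictly convex body is a continuous system of lines whose center lies in the interior of $K$. Hence $\bd K$ is already an ellipsoid, and it remains only to show that the equal-area condition forces all three semiaxes to be equal.

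\medskip

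So suppose $\bd K$ is the ellipsoid with semiaxes $a \ge b \ge c$ and center at the origin (the center of the diametral system). Take the diametral line $L$ along the longest axis, i.e.\ the $x$-axis. I would then compute the area of a planar central section $H \cap K$ where $H$ is a plane containing $L$: such a plane is spanned by the $x$-axis and a unit vector $(0,\cos\varphi,\sin\varphi)$ in the $yz$-plane, and the section is an ellipse with one semiaxis $a$ and the other semiaxis equal to $r(\varphi) = \bigl(\cos^2\varphi/b^2 + \sin^2\varphi/c^2\bigr)^{-1/2}$. Thus the area is $\pi a\, r(\varphi)$, a function of $\varphi$ that ranges monotonically between $\pi a c$ (at $\varphi = \pi/2$, the $xz$-section) and $\pi a b$ (at $\varphi = 0$, the $xy$-section). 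The key point is that this area function is strictly monotone on $[0,\pi/2]$ unless $b = c$: it attains each value strictly between $\pi a c$ and $\pi a b$ exactly once per quarter-turn, hence at most twice among all central sections through $L$ (accounting for the symmetry $\varphi \mapsto -\varphi$). Therefore, if $b \ne c$, no value of the area is attained by three distinct sections through $L$, contradicting the hypothesis that there are three elliptic sections of the same area through this diametral line.

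\medskip

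It follows that $b = c$. Now I would run the same argument with $L$ taken along the shortest axis (the $z$-axis): the sections through $L$ have one semiaxis $c$ and the other ranging between $a$ and $b$, and again the area function is strictly monotone unless $a = b$, so the equal-area-triple hypothesis forces $a = b$. Combining $a = b$ and $b = c$ gives $a = b = c$, so $\bd K$ is a sphere.

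\medskip

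I expect the only delicate point to be the counting argument: one must be careful that ``three elliptic sections determining an angle at least $\alpha$'' really means three genuinely distinct planes through $L$, and that the area function $\varphi \mapsto \pi a\, r(\varphi)$, viewed on the full circle of directions (period $\pi$, since $\varphi$ and $\varphi + \pi$ give the same plane), takes no value three times when $b \ne c$. This is immediate from the strict monotonicity of $r$ on each half-period together with the reflection symmetry, but it is the crux of the proof; everything else is the direct invocation of Theorem~\ref{thm:diametral} and elementary computation of areas of central sections of an ellipsoid. One should also note that the degenerate sub-case where $K$ is already a solid of revolution but not a sphere is exactly what the argument rules out, so no separate treatment is needed.
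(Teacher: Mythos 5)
Your proposal is correct and follows essentially the same route as the paper: invoke Theorem~\ref{thm:diametral} to conclude that $\bd K$ is an ellipsoid, then use the three-equal-areas hypothesis along a diametral line in an axis direction to force the orthogonal central section to be a circle. Your version is in fact slightly more careful than the paper's one-line second step, since you run the monotonicity argument for two different axes (obtaining $b=c$ and then $a=b$), whereas a single circular cross-section orthogonal to one axis would by itself only give two equal semiaxes.
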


\begin{proof} By Theorem \ref{thm:diametral}, $\bd K$ is an ellipsoid. Note now that the hypothesis implies that the section through the center of $K$ orthogonal to one of the axes is a circle. This implies that the three axes of the ellipsoid $\bd K$ have the same length. \end{proof}

\section{Four elliptic sections through a point}

\begin{maintheorem} Let $K\subset\mathbb{R}^3$ be a convex body with a $C^2$-differentiable boundary and let $\alpha> 0$. Suppose that through every $p\in \bd K$ there are four planes $H_{p_1}$, $H_{p_2}$, $H_{p_3}$, $H_{p_4}$ satisfying:
\begin{itemize}
\item $H_{p_j}\cap \bd K$ is an ellipse $E_{p_j}$ of area greater than $\alpha > 0$, $j=1,2,3,4$, 
\item for $1\leq i<j\leq 4$, the ellipses $E_{p_i}$ and $E_{p_j}$ are not tangent.
\end{itemize}
Then $\bd K$ is an ellipsoid. \end{maintheorem}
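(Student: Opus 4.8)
\emph{Setting up.} Fix $p\in\bd K$ and write $H_1,\dots,H_4$ for the four planes through $p$ and $E_i=H_i\cap\bd K$ for the corresponding ellipses, all of which pass through $p$. As recalled in Section~2, two elliptic sections $H_i\cap\bd K$ and $H_j\cap\bd K$ meet in exactly two points precisely when the line $H_i\cap H_j$ meets $\mathrm{int}\,K$. The first step is to note that the non-tangency hypothesis forces this for every pair: the tangent line of $E_i$ at $p$ is $H_i\cap T_p(\bd K)$, so if $E_i$ and $E_j$ are not tangent at $p$ their tangent lines there are distinct lines of the tangent plane, hence $H_i\cap H_j\not\subset T_p(\bd K)$; since $\bd K$ is $C^2$ and convex, a line through a boundary point not contained in the tangent plane at that point meets $\mathrm{int}\,K$. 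Thus each pair $E_i,E_j$ meets $\bd K$ at $p$ and at exactly one further point $q_{ij}$.

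\emph{A quadric through the four ellipses.} In Theorem~\ref{thm:ellipsoid} the quadric was pinned down because the three ellipses used there had no common point, whereas here all four ellipses pass through $p$ and a short count shows that the points obtainable from their mutual intersections together with a few generic points never lie in general position. The plan is to bring in a fifth ellipse supplied by the hypothesis at another point. Pick $r\in E_1$ outside the finite set $\{p,q_{12},q_{13},q_{14}\}$; among the four ellipses through $r$ at least three are transversal to $E_1$ at $r$, because their four tangent directions at $r$ are distinct and at most one of them is the tangent direction of $E_1$; among those I would select one, $F$, whose plane $H_F$ does not contain $p$ and meets each of $H_1,H_2,H_3,H_4$ in a secant line of $K$. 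Then $F$ crosses each $E_i$ in two points, none equal to $p$, and the four points of $F\cap(E_1\cup E_2)$, the two points $\{p,q_{12}\}=E_1\cap E_2$, and one generic point on each of $E_1,E_2,F$ are nine points in general position; exactly as in Theorem~\ref{thm:ellipsoid} they determine a unique quadric $Q$ with $E_1,E_2,F\subset Q$. But then $E_3$ and $E_4$ each have five points on $Q$ --- the point $p$, their two intersection points with $E_1\cup E_2$, and their two intersection points with $F$ --- so $E_3,E_4\subset Q$ as well. Hence $Q$ contains all four ellipses of $\bd K$ through $p$, and $F$.

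\emph{Passing to a neighborhood.} Since $E_1$ and $E_2$ are transversal at $p$, the tangent planes of $\bd K$ and of $Q$ at $p$ both equal the span of $T_pE_1$ and $T_pE_2$, so $\bd K$ and $Q$ are tangent at $p$. By Meusnier's theorem the curvature at $p$ of each curve $E_i$, together with the angle between $H_i$ and the common normal line, determines the normal curvature at $p$ in the direction $T_pE_i$ identically for $\bd K$ and for $Q$; thus the normal curvatures of $\bd K$ and of $Q$ at $p$ coincide in the four distinct directions $T_pE_1,\dots,T_pE_4$, and by Lemma~\ref{lem2} they coincide in every direction, i.e. $\bd K$ and $Q$ have second order contact at $p$. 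I would then argue, as in Theorem~\ref{thm:ellipsoid}, that $\bd K$ coincides with $Q$ near $p$: otherwise there is a sequence $q_n\to p$ in $\bd K\setminus Q$, and, after passing to a subsequence, the four ellipses through $q_n$ converge to four ellipses through $p$; a single ellipse through $p$ can coincide with, or be tangent to, at most one of $E_1,\dots,E_4$, so for $n$ large at least one of the ellipses $G_n$ through $q_n$ is transversal to three of $E_1,\dots,E_4$ and hence meets $Q$ in enough points --- here the second order contact just established, together with the $C^2$ regularity, lets one correctly count the intersection points that accumulate at $p$ --- to force $G_n\subset Q$, so $q_n\in Q$, a contradiction. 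Finally, as in Theorem~\ref{thm:ellipsoid}, $\bd K$ is covered by relatively open subsets each lying on a quadric; two quadrics that agree on a relatively open subset of $\bd K$ are equal, and $\bd K$ is connected, so $\bd K$ lies on a single quadric, which, bounding a convex body, is an ellipsoid.

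\emph{Main obstacle.} The two steps that need real work are the production of the fifth ellipse $F$ whose plane is simultaneously in secant position with respect to $H_1,\dots,H_4$ while avoiding $p$ --- which must exploit the abundance of non-tangent elliptic sections through the points of $E_1$ near $p$, treating exceptional configurations separately --- and the bookkeeping of intersection points in the limiting step, where several of them crowd toward $p$. It is precisely this last point that forces the $C^2$ hypothesis and Lemma~\ref{lem2} into the argument: they guarantee that $\bd K$ and the constructed quadric touch to second order at $p$, which is what makes the crowded intersections count correctly, in contrast with the purely algebraic situation of Theorem~\ref{thm:ellipsoid}.
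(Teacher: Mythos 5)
Your overall outline (build a quadric through the elliptic sections at $p$, show it coincides with $\bd K$ near $p$ by a limiting argument, then globalize by connectedness and compactness) matches the paper's, but the crucial step --- producing a quadric containing all four ellipses --- is where your route diverges and where two genuine gaps appear. First, the auxiliary fifth ellipse $F$: you need an elliptic section through some $r\in E_1\setminus\{p,q_{12},q_{13},q_{14}\}$ whose plane meets each of $H_1,\dots,H_4$ in a secant line of $K$. Transversality to $E_1$ at $r$ does give a secant with $H_1$, and you correctly observe that at least three of the four ellipses through $r$ have this property; but nothing in the hypotheses guarantees that any of them meets $E_2$, $E_3$ and $E_4$ in two points each --- an ellipse through $r$ of area greater than $\alpha$ can perfectly well be disjoint from, or tangent to, $E_2$. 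You flag this yourself as the ``main obstacle,'' and it is not a routine genericity argument: there is no continuity in the assignment $p\mapsto\{H_{p_1},\dots,H_{p_4}\}$, so letting $r\to p$ does not obviously force the sections at $r$ into secant position relative to $H_2,H_3,H_4$.

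Second, and independently of $F$: your five-point count for $E_3\subset Q$ uses the three points $p$, $q_{13}$, $q_{23}$ plus the two points of $E_3\cap F$. The hypotheses do not prevent $q_{13}=q_{23}$, nor even all six second intersection points from coinciding; this happens exactly when three, respectively all four, of the planes $H_{p_i}$ share a line through $p$, which is perfectly compatible with pairwise non-tangency. In that case $E_3$ carries only four known points of $Q$ and the B\'ezout count fails, and the fifth ellipse does not repair it: in the fully degenerate configuration every $E_j$ with $j\ge 2$ meets $E_1\cup F$ in at most $\{p,R\}$ plus two points of $F$. This is precisely why the paper splits into cases. It builds $Q$ as the quadric tangent to the support plane $H_p$ at $p$ and passing through the six points $p_{ij}$, so that each $E_{p_i}$ is pinned down inside $H_{p_i}$ by tangency to $L_{p_i}=H_{p_i}\cap H_p$ at $p$ plus three further points --- no fifth ellipse is needed --- and in the degenerate cases it replaces the missing points by a second tangency at the common point $R$ and, in the worst case, by the curvature-matching Lemma~\ref{lem2}. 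Your use of Lemma~\ref{lem2} via Meusnier to obtain second-order contact of $\bd K$ and $Q$ at $p$ is an attractive device, but it is invoked only after the quadric is built and therefore cannot substitute for these missing steps. Until $F$ is actually produced and the coincident-$q_{ij}$ configurations are handled, the proof is incomplete.
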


\begin{proof}
We shall prove that locally, $\bd K$ is a quadric.  Let $p\in \bd K$. Then the line $L_{ij}=H_{p_i}\cap H_{p_j}$ intersects the interior of $K$, because  the ellipses $E_{p_i}$ and $E_{p_j}$ are not tangent. For $1\leq i < j \leq 4$, let $\{p, p_{ij}\}=E_{p_i}\cap E_{p_j} = L_{ij}\cap \bd K$. We shall show that there is a quadric $Q$ that contains the four ellipses $E_{p_1}$, $E_{p_2}$, $E_{p_3}$, $E_{p_4}$ and from that, we shall prove that $Q$ coincides with $\bd K$ in a neighborhood of $p$.

In order to prove that $E_{p_1}$, $E_{p_2}$, $E_{p_3}$, $E_{p_4}$ are contained in a quadric $Q$, we follow the spirit of Gruber and Bianchi in \cite{gruber}.  If $p\in \bd K$, let $H_p$ be the support plane of $K$ at $p$ and let $L_{p_i}=H_{p_i}\cap H_p$, $i=1,\dots ,4$.

\medskip\noindent\textbf{Case a).}  No three of the planes $E_{p_1}$, $E_{p_2}$, $E_{p_3}$, $E_{p_4}$ share a line.  Then we have six distinct points $p_{ij}\in \bd K$. By elementary projective geometry, let $Q$ be the unique quadric which is tangent to the plane $H_p$ at $p$ and contains the six points $\{ p_{ij}\mid 1\leq i < j \leq 4 \}$.  Furthermore, let $F_1\subset H_1$ be the unique quadric which is tangent to the line $L_{p_1}$ at $p$ and contains the three points $p_{12}$, $p_{13}$, $p_{1,4}$.  Then $Q\cap H_{p_1}=F_1=E_{p_1}$. Similarly, the ellipses $E_{p_2}$, $E_{p_3}$,  and $E_{p_4}$ are contained in the quadric $Q$.

\begin{center}
\includegraphics[width=3in]{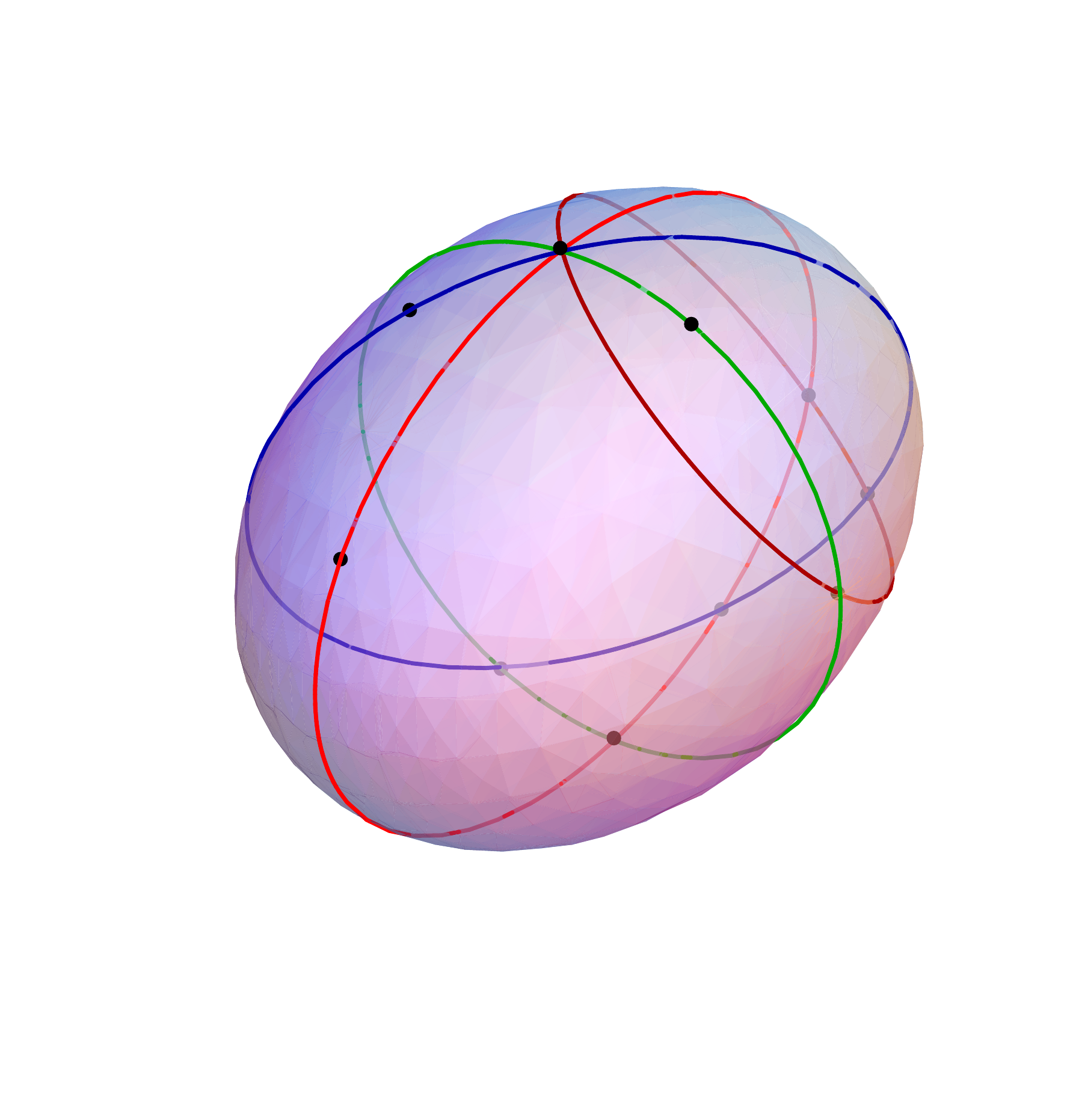}\rput(-3.7,5.85){$p$}\rput(-3.9,2.9){$p_{12}$}\rput(-2.3,3.3){$p_{13}$}\rput(-3.1,2.4){$p_{23}$}\rput(-.9,5.5){\textcolor{blue}{$E_1$}}\rput(-1.8,2.6){\textcolor[rgb]{0,.5,0}{$E_2$}}\rput(-2.4,6.75){\textcolor{red}{$E_3$}}\rput(-1.1,3.6){\textcolor[rgb]{.55,0,0.12}{$E_4$}}\rput(-4.6,5.45){$p_1$}\rput(-2.7,5.35){$p_2$}\rput(-4.95,3.8){$p_3$}
\end{center}

\medskip\noindent\textbf{Case b).} Three of planes  $H_{p_1}$, $H_{p_2}$, $H_{p_3}$, $H_{p_4}$  share a line. So without loss of generality, suppose $H_{p_1} \cap H_{p_2} \cap H_{p_3} =L_{12} = L_{13} =L_{23}$ and $p_{12} = p_{13} =p_{23}$. Let $\Gamma$ be the supporting plane of $K$ at $p_{12} = p_{13} =p_{23}$ and let $Q$ be the unique quadric tangent to $H_p$ at $p$ tangent to $\Gamma$ at  $p_{12} = p_{13} =p_{23}$  that contains arbitrarily chosen points $p_i\in E_i\setminus\{p, p_{12} = p_{13} =p_{23}\}$. Then $Q\cap H_1$ is the unique quadric contained in $H_1$, tangent to $L_{p_1}$ at $p$, tangent to $\Gamma \cap H_1$ at $p_{12} = p_{13} =p_{23}$ and that contains $p_1$.  This implies that the ellipse $E_1$ is contained in $Q$. Similarly, $E_2 \cup E_3 \subset Q$. Let $F_4$ be the unique quadric contained in $E_4$,  tangent to $L_{p_4}$ at $p$, and that contains the three distinct points $p_{14}$, $p_{24}$, and  $p_{34}$. Clearly $F_4=Q\cap H_4$ but also $F_4=E_4$, which implies that $E_4\subset Q$. 

\medskip\noindent\textbf{Case c).} The four planes $H_{p_1}$, $H_{p_2}$, $H_{p_3}$, $H_{p_4}$ share a line. Let $H_{p_1}\cap H_{p_2}\cap H_{p_3}\cap H_{p_4}=L_0$ and $L_0\cap \bd K=\{p,R\}$. At $R\in \bd K$ there is only one support plane $H_R$ of $K$. By using a projective homeomorphism if necessary, we may assume without loss of generality that $H_p$ is parallel to $H_R$ and also that $L_0$ is orthogonal to both $H_p$ and $H_R$. As in Case b), there is a quadric $Q$ containing $E_{p_1}$, $E_{p_2}$, $E_{p_3}$. By Lemma \ref{lem2}, we know that the sectional curvature at three different directions determines the sectional curvature at all other directions. In our situation, this implies that the curvature of the two ellipses $Q\cap H_{p_4}$ and $E_4$ are the same. This, together with the fact that both ellipses $Q\cap H_{p_4}$ and $E_4$ contained in $H_4$ are tangent at $p$ to $H_4\cap H_p$ and tangent at $R$ to $H_4\cap H_R$, implies that $Q\cap H_{p_4}=E_4$ and hence that $E_4\subset Q$.

We are ready to prove that there is a neighborhood $U$ of $\bd K$ at $p$ such that $U\subset Q$. Suppose this is not so; then there is a sequence $q_1, q_2,\dots, \in \bd K\setminus Q$ converging to $p$. For every $q_i\in \bd K$,  let $H_{q_i}$ be a plane through $q_i$ such that $H_{q_i}\cap \bd K$ is an ellipse $E_{q_i}$ of area greater than $\alpha$.  By considering a subsequence and renumbering if necessary, we may assume that the $H_{q_i}$ converge to a plane $H$ through $p$. The fact that $H_{q_i}\cap \bd K$ is an ellipse of area greater than $\alpha >0$ implies that $H$ is not a tangent plane of $K$. So let $L=H\cap H_p$.   We may assume without loss of generality that $L\neq L_{p_1}, L_{p_2},L_{p_3}$, so the plane $H$ intersects each of the ellipses $E_1,E_2,E_3$ at two points, one of them being $p$. Since $H_{q_i}$ converges to $H$, we may choose $n_0$ such that if $i>n_0$, then $H_{q_i}$ intersects the ellipse $E_j$ at two distinct points, $j=1,2,3$.  Therefore, since the quadrics $E_{q_i}$ and $Q\cap H_{q_i}$ share at least six points, they should be the same. This implies that $q_i\in Q$ for $i>n_0$, contradicting the fact that $q_1, q_2,\dots, \in \bd K\setminus Q$.

The fact that $\bd K$ is compact, connected and locally a quadric implies that $\bd K$ is an ellipsoid. \end{proof}

\section*{Acknowledgements}
The authors wish to thank Jes\'{u}s Jer\'{o}nimo and Edgardo Roldan-Pensado for many useful conversations regarding this paper. The second author wishes to acknowledge support from CONACyT under Project 166306 and support from PAPIIT--UNAM under Project IN112614.

\end{document}